\documentclass[a4paper,11pt,reqno]{amsart}

\usepackage[top=3cm, bottom=3cm, left=3cm, right=3cm]{geometry}

\usepackage[utf8]{inputenx}
\usepackage[english]{babel}
\usepackage{microtype}

\usepackage{mathtools}
\usepackage{amssymb}
\usepackage{amsfonts}
\usepackage{amsthm}
\usepackage{mathrsfs}
\usepackage{stmaryrd}

\usepackage{graphicx}
\usepackage{tikz}
\usepackage{tikz-cd}
\usepackage{dynkin-diagrams}
\usepackage[all]{xy}

\usepackage{tensor}
\usepackage{centernot}
\usepackage{faktor}
\usepackage{xfrac}
\usepackage{esint}
\usepackage{bbm}

\usepackage{hyperref}

\usepackage{xcolor}
\usepackage{array, tabularx}
\usepackage{mathdots}
\usepackage{comment}
\usepackage{paralist}
\usepackage{xcolor}

\theoremstyle{plain}
\newtheorem{thm}{Theorem}[section]
\newtheorem{prop}[thm]{Proposition}
\newtheorem{lem}[thm]{Lemma}

\theoremstyle{definition}
\newtheorem{defn}[thm]{Definition}
\newtheorem{ex}[thm]{Example}

\theoremstyle{remark}
\newtheorem{rem}[thm]{Remark}

\DeclareMathOperator{\Ric}{Ric}

\newcommand{\bigslant}[2]{{\raisebox{.2em}{$#1$}\left/\raisebox{-.2em}{$#2$}\right.}}

\begin{document}
	\newpage
	
	\title{Futaki invariant on lcK manifolds with potential}

  \author{Giacomo Perri}

	\address{Giacomo Perri \newline
		\textsc{\indent Institut for Matematik, Aarhus University\newline 
			\indent 8000, Aarhus C, Denmark}}
	\email{g.perri@math.au.dk}

\thanks{The author is supported by the Sapere Aude project $\lq\lq$Conformal geometry: metrics and cohomology" at Aarhus University}
	%\date{\today}

\begin{abstract}
    We prove that the Futaki invariant introduced by Futaki, Hattori, and Ornea vanishes identically on every compact locally conformally K\"ahler manifold with potential, thereby answering a question raised by Ornea and Verbitsky.
\end{abstract}
 \maketitle

%\tableofcontents

\section{Introduction}

The Futaki invariant was originally introduced as an obstruction to the
existence of K\"ahler--Einstein metrics on Fano manifolds and has since
played a central role in K\"ahler geometry \cite{Fut83}. Futaki, Hattori, and Ornea
\cite{FHO13} extended this invariant to arbitrary compact complex manifolds.
In this setting, the invariant
obstructs the existence of a volume form for which the top exterior power
of the associated Ricci form is proportional to the volume form itself.
They investigated this invariant from the point of view of locally conformally
K\"ahler (lcK) geometry. In particular, they proved that it vanishes on every
compact Vaisman manifold. On the other hand, they exhibited a one-point blow-up of
a Hopf surface for which the invariant is nonzero, showing that such a
vanishing result cannot hold for arbitrary lcK manifolds.

An important class in lcK geometry is that of lcK manifolds with potential.
These form a large and geometrically distinguished subclass of lcK
manifolds, which includes all Vaisman manifolds and all Hopf manifolds.
Motivated by the vanishing theorem for Vaisman manifolds \cite[Theorem~1.2]{FHO13}, Ornea and
Verbitsky asked whether the Futaki invariant vanishes on all compact
lcK manifolds with potential \cite{OV24b}. 
In this note, we answer this question in full.

\begin{thm}\label{main}
Let $(M,J)$ be a compact complex manifold admitting an lcK metric with
potential. Then its Futaki invariant vanishes identically.
\end{thm}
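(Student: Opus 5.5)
The plan is to use the defining feature of the Futaki--Hattori--Ornea invariant: it is independent of the chosen volume form, and it vanishes as soon as there is one volume form $V$ whose Ricci form $\rho_V=-\sqrt{-1}\,\partial\bar\partial\log V$ satisfies $\rho_V^n=c\,V$ for a constant $c$. So it suffices to construct, on any compact lcK manifold with potential, a volume form $V$ with $\rho_V^n\equiv 0$. This is my reading of the mechanism behind the Vaisman case \cite[Theorem~1.2]{FHO13}: there the Ricci form can be taken degenerate along the complex span of the Lee and anti-Lee fields. I will check this reduction against the precise normalisation in \cite{FHO13} before starting.

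\emph{Step 1 (cover and homogeneous potential).} Pass to the minimal $\mathbb{Z}$-cover $\tilde M$. By Ornea--Verbitsky, $\tilde M$ together with one point is a Stein space, an algebraic cone $\tilde M\cup\{0\}$. It carries a generator $\gamma$ of the deck group contracting towards $0$, and for $\dim_{\mathbb C}M\ge 3$ the manifold $M$ embeds into a Hopf manifold $(\mathbb{C}^N\setminus 0)/\langle A\rangle$. Using the Jordan decomposition of $A$ and averaging over the compact torus generated by the semisimple part, I would build an automorphic function $r>0$ on $\tilde M$ with the following properties:
\begin{itemize}
\item $r\circ\gamma=|\lambda|\,r$ for a constant $\lambda$;
\item $\sqrt{-1}\,\partial\bar\partial\log r$ is annihilated by a holomorphic vector field $\xi$ coming from a $\mathbb{C}^*$-action on the cone.
\end{itemize}
The second property is what makes $(\sqrt{-1}\,\partial\bar\partial\log r)^n=0$ on the $n$-dimensional manifold $\tilde M$. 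I do not need $r$ to be an lcK potential; it only has to be automorphic and have this degeneracy. The surface case will need separate treatment, via the classification of lcK surfaces with potential.

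\emph{Step 2 (automorphic holomorphic volume form).} Find a nowhere-vanishing holomorphic section $\Omega$ of $K_{\tilde M}^{\otimes m}$, for some $m\ge 1$, with $\gamma^*\Omega=\mu\,\Omega$ for a constant $\mu$. Then set
\[
V=\frac{(\Omega\wedge\bar\Omega)^{1/m}}{r^{k}},\qquad |\mu|^{2/m}=|\lambda|^{k}.
\]
This $V$ is $\gamma$-invariant, so it descends to $M$. On $\tilde M$, since $\Omega$ is holomorphic, its Ricci form is $\rho_V=k\,\sqrt{-1}\,\partial\bar\partial\log r$. Hence $\rho_V^n=0$ by Step~1, which proves the theorem.

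I expect Step~2 to be the main obstacle. The punctured cone may have nontrivial local class group at the vertex, so a trivialising section of a power of $K$ need not exist. If it fails, I would replace $\Omega\wedge\bar\Omega$ by $h^{-1}$ for a $\gamma$-automorphic Hermitian metric $h$ on $K_{\tilde M}$ of the form $h=r^{k}e^{\psi}$. I would then try to show that $\psi$ can be chosen so that $\sqrt{-1}\,\partial\bar\partial\psi$ is also annihilated by $\xi$. This should follow by averaging $h$ over the closure of the flow of $\xi$ together with $\mathrm{Im}\,\xi$, which acts by holomorphic automorphisms commuting with $\gamma$.
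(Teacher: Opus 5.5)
There is a genuine gap in Step~1, and it sits exactly at the non-Vaisman manifolds, which are the new cases. Your reduction itself is sound: one volume form with $\Ric_V^n\equiv0$ gives $\mathrm{F}_M\equiv 0$ directly from the definition.

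\textbf{Why Step~1 fails.} Put $u=\log r$. For a holomorphic field one has $\iota_\xi\partial\bar\partial u=\bar\partial(\xi u)$, so your degeneracy condition says that $\xi u$ is holomorphic. Since $\gamma_*\xi=\xi$ and $u\circ\gamma=u+\log|\lambda|$, the function $\xi u$ is $\gamma$-invariant. It therefore descends to the compact $M$ and is a constant $b$, which is real because $S^1\subset\mathbb{C}^*$ acts periodically. So $r$ must be \emph{homogeneous} for the $\mathbb{C}^*$-action, $r(\rho(\tau,x))=|\tau|^{2b}r(x)$, and $\gamma$-automorphic at the same time.

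Test this on the linear Hopf manifold $(\mathbb{C}^n\setminus0)/\langle A\rangle$ with $A=q(I+N)$, $0<|q|<1$, and $N\neq0$ a single nilpotent Jordan block; it is lcK with potential. Extend over $0$ by Hartogs and compare homogeneous components: every $A$-invariant holomorphic vector field is then linear and is a polynomial in $N$. Hence the only $\mathbb{C}^*$-actions commuting with $\gamma$ are $\tau\mapsto\tau^jI$, and $\xi=jE$ with $E$ the Euler field.
\begin{itemize}
\item If $b=0$, then $r$ descends to a positive function on $\mathbb{P}^{n-1}$ with $r\circ\bar A=|\lambda|r$, so $|\lambda|=1$. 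Step~2 would then need $|\mu|=1$. But here $\mu=(\det A)^m$: write $\Omega=g\,(dz_1\wedge\cdots\wedge dz_n)^{\otimes m}$; then $g$ extends without zeros over $0$, which forces $g\circ A=g$.
\item If $b\neq0$, then $u=c\log|x|^2+f([x])$ with $c=b/j\neq0$. So $i\partial\bar\partial u$ is the pullback of $\beta=c\,\omega_{FS}+i\partial\bar\partial f$, and $\beta$ must be invariant under $\overline{I+N}$. Take the invariant line $\ell=\mathbb{P}(\mathrm{span}(v,Nv))$ with $N^2v=0\neq Nv$. On $\ell$ this map is the translation $s\mapsto s+1$. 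A smooth $2$-form on $\mathbb{P}^1$ invariant under a translation vanishes, since its density is periodic but must decay like $|s|^{-4}$ at $\infty$. Hence $0=\int_\ell\beta=c\int_\ell\omega_{FS}\neq0$, a contradiction.
\end{itemize}

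Averaging over the compact torus only gives $S^1$-invariance. It never produces radial homogeneity compatible with the unipotent part of $\gamma$. Your fallback in Step~2 has the same defect: the real flow of $\xi$ is non-compact, so there is nothing to average over. In effect your construction works only when $\gamma$ acts on $\tilde M/\mathbb{C}^*$ through a compact group, which is essentially the already-known Vaisman case. Non-diagonal Hopf surfaces hit the same obstruction, so deferring surfaces to the classification does not avoid it.

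\textbf{What is missing, and what the paper does.} You never need to solve $\Ric_V^n=c\,V$; the object you already have in Step~1 is enough. The generator of the locally free $\mathbb{C}^*$-action is nowhere zero and $\gamma$-invariant, so it descends to a nowhere-vanishing holomorphic field $W$ on $M$. The paper uses the Futaki--Morita (Bott-type) localization formula, which gives $\mathrm{F}_M(W)=0$ for any nowhere-vanishing holomorphic field. Since $W+tV$ stays nowhere vanishing for small $t\neq0$, linearity gives $\mathrm{F}_M(V)=0$ for every $V$.

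For surfaces, the paper first shows $M$ is minimal: a rational curve would lift to $\tilde M$ and have zero $dd^c\varphi$-area. It then argues case by case.
\begin{itemize}
\item Outside class VII, $M$ carries a Vaisman structure.
\item In class VII with $b_2>0$, $M$ has no nonzero holomorphic vector fields, by Dloussky--Oeljeklaus--Toma together with the fact that Kato surfaces carry no potential.
\item In class VII with $b_2=0$, $M$ is a Hopf surface. It carries the explicit nowhere-vanishing fields $z\partial_z+w\partial_w$ or $mz\partial_z+w\partial_w$, and a finite-cover argument handles secondary Hopf surfaces.
\end{itemize}
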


Theorem~\ref{main} extends both the vanishing theorem of Futaki, Hattori, and Ornea for compact Vaisman manifolds and the author's previous result for Hopf manifolds \cite{Per26}. The proof differs from the arguments used
in these two special cases and instead relies on a general vanishing
criterion that is independent of lcK geometry.

\begin{prop}\label{vanishing if existence}
    Let $(M,J)$ be a compact complex manifold admitting a nowhere-vanishing
holomorphic vector field. Then its Futaki invariant vanishes identically.
\end{prop}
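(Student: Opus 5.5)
The plan is to recast the Futaki invariant of \cite{FHO13} as an equivariant characteristic number and then kill it with a Bott-type vanishing argument, using the nonsingular holomorphic foliation generated by the nowhere-vanishing field $Z$. Recall that for a holomorphic vector field $Y$ and a volume form $\Omega$ (equivalently, a hermitian metric on $K_M^{-1}$ with Chern connection $\nabla$ and Ricci form $\rho$), the invariant is an integral of the shape $\int_M \theta_Y\,\rho^n$ up to normalisation, where $\theta_Y$ is the moment-type function attached to $Y$, essentially the $\Omega$-divergence of $Y$. The first step is to rewrite this as the degree-$2n$ component of the equivariant form $(\rho+\theta_Y)^{n+1}$, i.e.\ as the evaluation of the $Y$-equivariant first Chern form of $K_M^{-1}$ raised to the power $n+1$. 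Then I will check, by the usual transgression argument, that this number depends only on $Y$ and not on the choice of $Y$-compatible connection on $K_M^{-1}$. It need not be a Chern connection of a hermitian metric: any connection whose curvature is of type $(1,1)$ and whose moment $\nabla_Y - L_Y$ is defined should do. This is the same argument FHO use for independence of $\Omega$, extended to a slightly larger class of connections.

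Next I use $Z$. Since $Z$ vanishes nowhere, $L=\mathcal O\cdot Z\subset TM$ is a trivial holomorphic line subbundle, and $Q=TM/L$ is the normal bundle of a nonsingular holomorphic foliation. Hence $K_M^{-1}\cong \det Q\otimes L\cong\det Q$. On $\det Q$ I take the connection induced by a partial Bott connection along $Z$, extended to a $(1,0)$-connection. Its first Chern form is then basic for the foliation, so locally it is pulled back from an $(n-1)$-dimensional leaf space. Therefore its $n$-th power vanishes pointwise, by the standard Bott vanishing argument, and the same holds for all top-degree terms $c^{\,n}\wedge(\text{anything of degree }0)$.

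Finally I treat the equivariant term. If $[Y,Z]=0$, then $Y$ preserves $L$ and the foliation, and the Bott connection can be chosen $Y$-invariant, for instance by adding a $Y$-invariant hermitian correction transverse to $Z$. In that case the equivariant extension is $c+\mu_Y$ with $c$ basic, and the top component is a multiple of $\mu_Y\,c^{\,n}=0$. Hence $f(Y)=0$ for all $Y$ in the centraliser of $Z$.

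The main obstacle is a general $Y$ that does not commute with $Z$. Here I would use that the Futaki invariant is a Lie algebra character on the finite-dimensional algebra $\mathfrak h(M)$ of holomorphic vector fields, so it vanishes on $[\mathfrak h,\mathfrak h]$. I would then try to show that $\mathfrak h(M)$ is spanned by $[\mathfrak h,\mathfrak h]$ together with the centralisers of nowhere-vanishing fields. Nowhere vanishing is an open condition in $\mathfrak h(M)$, so there is an open set $U$ of such $Z$. For any $Y$ I would try to write $Y$ as a sum of its component commuting with some $Z\in U$ plus brackets, for instance using a Jordan decomposition of $\mathrm{ad}_Z$ with $Z$ generic in $U$: each $\mathrm{ad}_Z$-eigenvector with nonzero eigenvalue is a bracket, and the generalised $0$-eigenspace consists of fields whose powers of $\mathrm{ad}_Z$ eventually vanish. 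Handling that nilpotent part is where I expect the real difficulty. If the algebraic route fails, the fallback is to run the Bott argument directly with a connection adapted to $Z$ but only $Y$-compatible up to an explicit transgression term, and to show that this term integrates to zero.
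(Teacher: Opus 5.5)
Your Bott-vanishing argument, as written, gives $\mathrm{F}_M(Y)=0$ only for $Y$ in the centraliser of $Z$. The extension to arbitrary $Y$ is left open, so the proof is incomplete. The route you sketch does not close the gap. On the generalised $0$-eigenspace of $\mathrm{ad}_Z$, the top vector of any Jordan block of size at least $2$ lies neither in $\ker\mathrm{ad}_Z$ nor in $\operatorname{im}\mathrm{ad}_Z$, nor in their sum. So brackets with $Z$ together with the centraliser of $Z$ need not span $H^0(M,T^{1,0}M)$, and your sketch has no way to control this nilpotent part.

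The missing idea is elementary, and you already have both ingredients. Every nowhere-vanishing field lies in its own centraliser. Hence your commuting case gives $\mathrm{F}_M(Z')=0$ for every $Z'$ in the open set $U$ of nowhere-vanishing fields. Since $\mathrm{F}_M$ is $\mathbb{C}$-linear and $U$ is a nonempty open subset of the finite-dimensional space $H^0(M,T^{1,0}M)$, this forces $\mathrm{F}_M\equiv 0$. Concretely, for $Y\neq 0$ and $0<|t|<\min_M|Z|_h/\max_M|Y|_h$, the field $Z+tY$ vanishes nowhere, so $0=\mathrm{F}_M(Z+tY)=\mathrm{F}_M(Z)+t\,\mathrm{F}_M(Y)=t\,\mathrm{F}_M(Y)$. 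This is exactly the paper's proof. The only difference is that the paper imports $\mathrm{F}_M(Z')=0$ for nowhere-vanishing $Z'$ from the Futaki--Morita localization formula, whereas you derive it from Bott vanishing. That is a legitimate self-contained substitute for that step, and no character property of $\mathrm{F}_M$ is needed.

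In fact your own framework needs neither the commuting hypothesis nor any $Y$-invariance.
\begin{itemize}
\item Fix $Y$. Let $\nabla$ be any connection on $K_M^{-1}$ with $\nabla^{0,1}=\bar\partial$, and put $\mu_Y=\nabla_Y-\mathcal{L}_Y$ and $D=\bar\partial+\iota_Y$.
\item Then $D^2=0$ and $D\bigl(\Theta^{1,1}+\mu_Y\bigr)=0$. Replacing $\nabla$ by $\nabla+\alpha$, with $\alpha$ a $(1,0)$-form, changes $\Theta^{1,1}+\mu_Y$ by $D\alpha$.
\item So the $(n+1)$-st powers differ by a $D$-exact form, whose $(n,n)$-part is $\bar\partial$-exact. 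Hence $\int_M\mu_Y\,(\Theta^{1,1})^n$ does not depend on $\nabla$.
\item For the Chern connection of the metric on $K_M^{-1}$ induced by $\Omega$, one has $\mu_Y=\mathrm{div}_\Omega Y$ and $\Theta^{1,1}=-i\,\mathrm{Ric}_\Omega$, so this integral equals $(-i)^n\mathrm{F}_M(Y)$.
\item Now take $\nabla=\nabla^0-\eta\,\mu^0_Z$, where $\nabla^0$ is any such connection and $\eta$ is a $(1,0)$-form with $\eta(Z)=1$. Then $\nabla_Z=\mathcal{L}_Z$.
\item In a holomorphic flow box with $Z=\partial_{z_1}$ and frame $\partial_{z_1}\wedge\cdots\wedge\partial_{z_n}$, the connection form is $\theta=\sum_{j\ge 2}\theta_j\,dz_j$.
\item So $\Theta^{1,1}=\bar\partial\theta$ lies in the ideal generated by $dz_2,\dots,dz_n$, and $(\Theta^{1,1})^n=0$ pointwise, whatever $\mu_Y$ is.
\end{itemize}

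This also shows two corrections your write-up needs:
\begin{itemize}
\item The admissible class must be all $(1,0)$-connections, not only those with curvature of type $(1,1)$. The $Z$-adapted connection generally has a nonzero $(2,0)$-part $\partial\theta$. This is harmless, since it never enters the $(n,n)$-component.
\item The vanishing of $c^n$ comes from this ideal (Baum--Bott) argument, not from the Chern form being basic, which in general it is not.
\end{itemize}
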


Proposition~\ref{vanishing if existence} follows from a localization formula for the Futaki--Morita integral invariants. We then
apply this result to lcK manifolds with potential. In complex
dimension at least three, a suitable K\"ahler covering carries a
locally free holomorphic $\mathbb{C}^*$-action commuting with the
deck group. Its infinitesimal generator is nowhere-vanishing and
descends to the quotient. The case of complex surfaces is treated
separately using results from the classification of compact complex surfaces.

Section~\ref{Preliminaries} recalls the necessary background on lcK manifolds with potential
and the Futaki invariant, while Section~\ref{main section} is devoted to the proof of
Theorem~\ref{main}.

\hfill

\noindent {\it Acknowledgments.}
The author acknowledges the use of ChatGPT 5.6 Plus for discussions related to this work. The author thanks Alexandra Otiman for reading a preliminary version of this note and for helpful suggestions.

\section{Preliminaries}\label{Preliminaries}
In this section, we briefly recall the main definitions and properties of locally conformally K\"ahler manifolds and of the Futaki invariant. For further details, we refer to \cite{OV24b} and \cite{FHO13}, respectively.

\subsection{LcK manifolds}
    A \emph{locally conformally K\"ahler} (lcK) manifold is a connected Hermitian manifold $(M,J,g,\omega,\theta)$ of complex dimension $n\geq 2$, where
$\omega:=g(J\cdot,\cdot)$
is the fundamental $2$-form associated with the Riemannian metric $g$, and $\theta$ is a closed $1$-form, called the \emph{Lee form} of $\omega$, satisfying
\begin{align}\label{lcKdefinition}
d\omega=\theta\wedge\omega.
\end{align}
If $\theta=0$, then $\omega$ is Kähler. If $\theta=d\varphi$ is exact, then the conformally equivalent metric
$e^{-\varphi}\omega$
is K\"ahler. In this case, $\omega$ is called \emph{globally conformally K\"ahler} (gcK). An lcK manifold whose Lee form is parallel with respect to the Levi--Civita connection is called a \emph{Vaisman manifold}. Throughout the paper, all lcK structures are assumed to be strict, \textrm{i.e.}, their Lee forms are not exact. Compact strict lcK manifolds do not admit K\"ahler structures \cite[Corollary 2.2]{Vai80}.

Equivalently, an lcK manifold is a complex manifold $(M,J)$ admitting a
K\"ahler covering $(\tilde{M},\tilde{\omega})$ whose deck transformation
group $\Gamma$ acts by K\"ahler homotheties; that is, for every
$\gamma\in\Gamma$, there exists a positive constant $c_\gamma>0$ such
that
$\gamma^*\tilde{\omega}=c_\gamma\tilde{\omega}.$
A smooth function $f$ on $\tilde{M}$ is called \emph{automorphic} if for each $\gamma \in \Gamma$, $\gamma^* f = c_\gamma f$.
\begin{defn}
    Let $(M,J,\omega,\theta)$ be an lcK manifold, and let
$(\tilde{M},\tilde{\omega})$ be a K\"ahler covering of $M$. We say
that $\omega$ is an \emph{lcK metric with potential} if $\tilde{M}$ admits
a global positive automorphic K\"ahler potential
$$
\varphi\colon\tilde{M}\longrightarrow\mathbb{R}_{>0}
$$
such that
$$
dd^c\varphi=\tilde{\omega}.
$$
If the automorphic potential $\varphi$ is proper, meaning that the
preimage of every compact subset is compact, then we say that $\omega$
is an lcK metric with \emph{proper} potential; otherwise, we say that it is an lcK metric
with \emph{improper} potential. Similarly, we say that $M$ is an \emph{lcK manifold
with (proper) potential} if it admits an lcK metric with (proper)
potential.
\end{defn}

\begin{rem}\label{proper potential}
If a compact complex manifold $(M,J)$ admits an lcK metric with
potential, then it also admits a possibly different lcK metric with
proper potential \cite[Proposition 2.13]{OV16}.
\end{rem}

\begin{ex}
    A \emph{(primary) Hopf manifold} $H \equiv H_{\gamma}$ is a quotient 
\begin{equation}
    H := \bigslant{\mathbb{C}^n \setminus \{0\}}{<\gamma>},
\end{equation}
where 
$$\gamma: \mathbb{C}^n \to \mathbb{C}^n$$
is a biholomorphic contraction centered at zero. If $\gamma$ is linear, then $H_\gamma$ is called a \emph{linear Hopf manifold}.
Hopf manifolds are lcK manifolds with potential \cite{OV23}.

A \emph{secondary Hopf manifold} is a quotient of a Hopf manifold by a finite
group freely acting on it.
\end{ex}
    
In complex dimension at least three, compact lcK manifolds with
potential can be characterized as complex submanifolds of
linear Hopf manifolds.

\begin{thm}[\cite{OV10}]
Let $(M,J)$ be a compact complex manifold of complex dimension at least
three. Then $M$ is an lcK manifold with potential if and only if it
admits a holomorphic embedding into a linear Hopf manifold.
\end{thm}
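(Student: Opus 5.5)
The plan is to prove the two implications separately, taking from the earlier material the facts that linear Hopf manifolds are lcK with potential \cite{OV23} and Remark~\ref{proper potential}. The implication ``embedding $\Rightarrow$ potential'' is formal. The converse has a geometric half (compactifying the Kähler covering) and an analytic half (linearizing the deck transformation). The dimension hypothesis enters in the geometric half, and I expect the main difficulty in the analytic half.

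\emph{Embedding implies potential.} Let $H=(\mathbb{C}^N\setminus\{0\})/\langle A\rangle$ be a linear Hopf manifold, with $A$ a linear contraction. Choose an automorphic strictly plurisubharmonic potential $\psi>0$ on $\mathbb{C}^N\setminus\{0\}$. This exists because $H$ is lcK with potential; for diagonalizable $A$ with real positive eigenvalues, a sum of weighted powers of $|z_i|$ works. Let $M\subset H$ be a compact complex submanifold, and let $\tilde M$ be a connected component of its preimage in $\mathbb{C}^N\setminus\{0\}$, with deck group the stabilizer of $\tilde M$ in $\langle A\rangle$. The restriction $\psi|_{\tilde M}$ is positive, automorphic and strictly plurisubharmonic. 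Hence $dd^c\psi|_{\tilde M}$ is a Kähler form on which the deck group acts by homotheties. So $M$ is lcK with potential, and the potential is proper since $\psi$ is.

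\emph{Potential implies embedding: compactifying the covering.} By Remark~\ref{proper potential} we may assume the potential $\varphi$ is proper. Passing to the smallest covering on which $\varphi$ is defined, the deck group $\Gamma$ embeds into $\mathbb{R}_{>0}$ through $\gamma\mapsto c_\gamma$. Properness of $\varphi$ together with compactness of $M$ forces $\Gamma\cong\mathbb{Z}$, generated by some $\gamma$ with $c_\gamma<1$. The level sets $\{\varphi=c\}$ are compact and strictly pseudoconvex, as seen from the sublevel side. The end of $\tilde M$ where $\varphi\to 0$ is therefore a strongly pseudoconcave end with compact boundary of real dimension $2n-1\ge 5$. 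Since $n\geq 3$, the Rossi / Andreotti--Siu filling theorem applies: this end can be compactified by adding finitely many points, in fact one point since the end is connected. The result is a Stein space $\tilde M_c=\tilde M\cup\{0\}$ with at worst an isolated singularity at $0$. This is exactly the step where complex dimension at least three is needed. The contraction $\gamma$ extends holomorphically to $\tilde M_c$ by Riemann extension on normal spaces, fixes $0$, and every orbit converges to $0$.

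\emph{Potential implies embedding: linearizing $\gamma$.} It remains to find a $\gamma$-equivariant holomorphic embedding $\tilde M_c\hookrightarrow\mathbb{C}^N$ in which $\gamma$ acts by a linear contraction. I would consider the Hilbert space $\mathcal H$ of $L^2$ holomorphic functions on a relatively compact sublevel set $\{\varphi<c\}\cup\{0\}$. Because $\gamma$ maps this set into a compact subset of itself, $\gamma^*$ is a compact operator on $\mathcal H$ with spectral radius less than $1$. Its generalized eigenspaces are finite-dimensional and $\gamma^*$-invariant, and their span is dense in $\mathcal H$. Since $\tilde M_c$ is Stein, finitely many holomorphic functions embed a compact fundamental domain $\{\gamma c\le\varphi\le c\}$. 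Approximating them by finite sums of generalized eigenfunctions still separates points and tangents there, because these are open conditions on a compact set. Take $V$ to be the finite-dimensional $\gamma^*$-invariant space spanned by these generalized eigenvectors. Using a basis of $V$ as coordinates gives an equivariant map $\tilde M_c\to\mathbb{C}^N$ on which $\gamma$ acts by a linear map $A$ whose eigenvalues lie in the open unit disk. The map is injective and immersive on the fundamental annulus, hence by equivariance on all of $\tilde M$, and it sends $\tilde M$ into $\mathbb{C}^N\setminus\{0\}$. Passing to the quotient yields the embedding $M\hookrightarrow(\mathbb{C}^N\setminus\{0\})/\langle A\rangle$.

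The delicate point in this last step is the separation argument. The embedding is certified only on one fundamental annulus, so injectivity across different $\gamma$-orbits needs a separate argument. One must also check that $0$ has no other preimage. Both should follow from the fact that $\gamma$ is a contraction and that the image of $\tilde M$ avoids $0$.
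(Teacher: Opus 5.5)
The paper gives no proof of this statement; it quotes it from Ornea--Verbitsky. Your easy implication and your filling step follow their route: the filling step uses Rossi--Andreotti--Siu to produce a Stein space $\tilde M_c=\tilde M\cup\{0\}$ on which $\gamma$ acts as a contraction fixing $0$.

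The gap is in the linearization, where two claims are not established.

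First, you assert that the root vectors of the compact operator $\gamma^*$ span a dense subspace of $\mathcal H$. This is not a general property of compact operators: the Volterra operator is compact and has no eigenvectors at all. You give no argument specific to this situation, yet your whole approximation scheme depends on it. Also, $\gamma^*$ fixes constants, so its spectral radius is $1$. It is $<1$ only on the ideal $\mathfrak m$ of functions vanishing at $0$, and you need $V\subset\mathfrak m$ to obtain a contraction $A$.

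Second, certifying an injective immersion only on the annulus $K$ does not give global injectivity. A coincidence $\Psi(x)=\Psi(\gamma^j y)$ with $x,y\in K$ and $j\geq 1$ becomes $\Psi(x)=A^j\Psi(y)$ under equivariance, and nothing in your construction excludes it. Your proposed remedy is circular, for three reasons:
\begin{itemize}
\item ``$\Psi(\tilde M)$ avoids $0$'' is the same statement as ``$0$ has no other preimage'';
\item it is not automatic, since a map injective on $K$ can still send a point of $K$ to $0$;
\item even if arranged, it does not prevent collisions between $K$ and $\gamma K$.
\end{itemize}

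The missing idea is to certify the embedding at the fixed point $0$ rather than on an annulus. Then density is unnecessary and injectivity comes from the contraction.

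\emph{Construction of $V$.} The eigenvalues of $\gamma^*$ on the finite-dimensional space $\mathcal O_0/\mathfrak m_0^2$ are nonzero, because $\gamma$ is an automorphism of $\tilde M_c$. Choose $\epsilon>0$ smaller than all of their moduli. Use the Riesz decomposition $\mathcal H=E_{>\epsilon}\oplus F_{<\epsilon}$, where $E_{>\epsilon}$ is the finite sum of root spaces with $|\lambda|>\epsilon$. The image of $F_{<\epsilon}$ in $\mathcal O_0/\mathfrak m_0^2$ is an invariant subspace on which $\gamma^*$ has spectral radius $<\epsilon$, so it is zero. Root vectors with $\lambda\neq 1$ vanish at $0$. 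Hence $V:=E_{>\epsilon}\cap\mathfrak m$ is a finite-dimensional $\gamma^*$-invariant space surjecting onto $\mathfrak m_0/\mathfrak m_0^2$. A basis of $V$ gives $\Psi$ with $\Psi\circ\gamma=A\circ\Psi$, and $\Psi$ is a closed embedding of some neighbourhood $U$ of $0$.

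\emph{Spreading by the contraction.} For $x,y\in\tilde M_c$ we have $\gamma^k x,\gamma^k y\in U$ for $k\gg 0$. So $\Psi(x)=\Psi(y)$ gives $\Psi(\gamma^k x)=A^k\Psi(x)=A^k\Psi(y)=\Psi(\gamma^k y)$, hence $\gamma^k x=\gamma^k y$ and $x=y$.
\begin{itemize}
\item Taking $y=0$ yields $\Psi^{-1}(0)=\{0\}$.
\item The identity $d\Psi_{\gamma^k x}\circ d(\gamma^k)_x=A^k\circ d\Psi_x$, with $d(\gamma^k)_x$ invertible, gives immersivity on $\tilde M$.
\item $A$ is invertible because $\gamma^*$ is injective.
\item The eigenvalues of $A$ lie in $\{0<|\lambda|<1\}$ by the maximum principle.
\end{itemize}
Therefore $M\to(\mathbb C^N\setminus\{0\})/\langle A\rangle$ is an injective immersion of a compact manifold, i.e.\ an embedding.
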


\subsection{The Futaki invariant in complex geometry}
Let $(M,J)$ be a compact complex manifold of complex dimension $n$, equipped with a volume form $\Omega$. On a complex manifold, giving a volume form $\Omega$ is equivalent to giving a Hermitian metric $H_\Omega$ on the canonical bundle $K_M$. Indeed, if $\nu =dz_1\wedge \cdots \wedge dz_n$ is a local section of $K_M$, then the Hermitian metric $H_\Omega$ is given by $H_\Omega(\nu,\nu) := \frac{i^{n^2}\nu \wedge \overline{\nu}}{\Omega}$.

The Ricci form $\Ric_\Omega$ of the volume form $\Omega$ is defined by
$$\Ric_\Omega:=-i\,\Theta\big(K_M,H_\Omega),$$
where $\Theta\big(K_M,H_\Omega)$ denotes the Chern curvature of $K_M$ with respect to $H_\Omega$.
In particular, the Ricci form associated with the volume form of a Hermitian metric $\omega$ is the Chern--Ricci form of $\omega$, $\textsl{i.e.}$,
$$\Ric_{\omega^n}=\text{CRic}_\omega.$$
If locally $$\Omega \simeq_{\textrm{loc}} a \, idz_1\wedge d\overline{z}_1\wedge\cdots\wedge idz_n\wedge d\overline{z}_n,$$
then the associated Ricci form is given by 
$$\Ric_\Omega \simeq_{\textrm{loc}} -i\,\partial \overline{\partial} \log a.$$

Let $V \in H^0\big(M, T^{1,0}_M\big)$ be a holomorphic vector field on $M$. The divergence of $V$ with respect to the volume form $\Omega$ is defined by
$$\text{div}_\Omega V := \frac{\mathcal{L}_V \Omega}{\Omega} = \frac{d (\iota_V \Omega)}{\Omega} = \frac{\partial (\iota_V \Omega)}{\Omega},$$
where the last equality follows from the fact that $V$ is holomorphic. Here, $\mathcal{L}_V\Omega$ denotes the Lie derivative of $\Omega$ along
$V$, while $\iota_V$ denotes contraction with $V$. Locally, the divergence is given by 
$$\text{div}_\Omega V \simeq_{\textrm{loc}} V(\log a) + \sum_{l=1}^n \frac{\partial V_l}{\partial z_l},$$
where $V \simeq_{\textrm{loc}} \sum_{l=1}^n V_l \frac{\partial}{\partial z_l}$.

\begin{defn}[\cite{FHO13}] The Futaki invariant $\text{F}_M$ of $M$ is the linear map
   $$\text{F}_M:H^0\big(M, T^{1,0}_M\big) \to \mathbb{C}$$
   defined by
   \begin{equation}
       \text{F}_M(V) := \int_M \text{div}_\Omega V \, \text{Ric}_\Omega^n.
   \end{equation}
\end{defn}
As shown in \cite{FHO13}, this definition is independent of the choice
of the volume form $\Omega$.\\

Integrating by parts, we can rewrite $\mathrm{F}_M$ as
\begin{equation}\label{secondform}
    \mathrm{F}_M(V)
=
-\int_M
V\left(\frac{\text{Ric}_\Omega^n}{\Omega}\right)\Omega.
\end{equation}
Consequently, the Futaki invariant obstructs the existence of a volume form $\Omega$ satisfying
\begin{equation}
    \text{Ric}_\Omega^n=c\,\Omega
\end{equation}
for some constant $c$. Indeed, the existence of such a volume form
implies that $\mathrm{F}_M$ vanishes identically.\\

In the classical K\"ahler setting, the Futaki invariant can be realized as one of the Futaki--Morita integral invariants \cite{FM85}. Inspired by Bott's residue formula \cite{Bot67}, Futaki and Morita obtained localization formulas for these invariants; see also \cite{Che18} for a more recent treatment. As explained in \cite{FHO13}, the invariant $\mathrm{F}_M$ considered
here is itself a Futaki--Morita integral invariant. Consequently, the same
localization argument applies without any K\"ahler assumption. We only need the following consequence.

\begin{thm}\label{vanishing on nowhere zero hvf}
    Let $(M,J)$ be a compact complex manifold and let
$V\in H^0\bigl(M,T^{1,0}M\bigr)$
be a nowhere-vanishing holomorphic vector field. Then
$\mathrm{F}_M(V)=0.$
\end{thm}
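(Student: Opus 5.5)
We prove the statement by a Bott-type localization argument, carried out directly on the integrand $\mathrm{div}_\Omega V\,\mathrm{Ric}_\Omega^n$. The idea is to show that this $2n$-form is exact when $V$ has no zeros. The natural tool is the equivariant (Cartan) differential $d_V := \bar\partial - \iota_V$ acting on forms. Since $V$ is holomorphic, $\iota_V$ anticommutes with $\bar\partial$ and $\iota_V^2=0$, so $d_V^2=0$.

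\textbf{Step 1: an equivariantly closed extension.} Fix a volume form $\Omega$, with Chern connection $\nabla$ on $K_M$ and curvature $\Theta=\Theta(K_M,H_\Omega)$. I would check that
\[
\mathrm{Ric}_\Omega + \tfrac{1}{2\pi}\cdot(\text{const})\,\mathrm{div}_\Omega V
\]
(with the constant chosen compatibly with the normalization $\mathrm{Ric}_\Omega=-i\Theta$) is $d_V$-closed up to the Dolbeault part. Concretely, locally $\mathrm{Ric}_\Omega=-i\partial\bar\partial\log a$, and $\mathrm{div}_\Omega V=V(\log a)+\sum_l\partial_l V_l$. From this one computes
\[
\iota_V\mathrm{Ric}_\Omega=-i\,\iota_V\partial\bar\partial\log a = i\,\bar\partial\bigl(V(\log a)\bigr) = i\,\bar\partial\,\mathrm{div}_\Omega V .
\]
The last equality holds because $\sum_l\partial_lV_l$ is holomorphic locally; more invariantly, $\mathrm{div}_\Omega V$ is the moment-map term $\iota_V\nabla-\mathcal L_V$ on $K_M$. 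Thus $\tilde\rho:=\mathrm{Ric}_\Omega + i\,\mathrm{div}_\Omega V$ satisfies $(\bar\partial-\iota_V)\tilde\rho=0$; note that $\bar\partial\mathrm{Ric}_\Omega=0$ since $\mathrm{Ric}_\Omega$ is $d$-closed of type $(1,1)$. Hence $\tilde\rho^{\,n+1}$ is $d_V$-closed. Its top-degree component, of bidegree $(n,n)$, is a nonzero constant multiple of $\mathrm{div}_\Omega V\,\mathrm{Ric}_\Omega^n$. I would fix signs and constants at the end.

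\textbf{Step 2: the homotopy operator away from zeros.} Since $V$ vanishes nowhere, choose any smooth Hermitian metric $h$ and set $\eta:=h(\cdot,\bar V)/|V|_h^2$, a smooth $(0,1)$-form with $\iota_V\eta=1$. Then
\[
d_V\eta=\bar\partial\eta-1,
\]
so $-d_V\eta$ is invertible in the algebra of forms, with inverse $\sum_{k\ge0}(\bar\partial\eta)^k$, a finite sum. Put $\beta:=-\eta\,(1-\bar\partial\eta)^{-1}$, so that $d_V\beta=1$. For any $d_V$-closed $\alpha$ we then get $\alpha=d_V(\beta\alpha)$. Taking the $(n,n)$-component gives
\[
\alpha_{(n,n)}=\bar\partial\bigl((\beta\alpha)_{(n,n-1)}\bigr),
\]
because $\iota_V$ lowers the $(1,0)$-degree and so cannot produce an $(n,n)$-form.

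\textbf{Step 3: conclusion.} Apply Step 2 with $\alpha=\tilde\rho^{\,n+1}$. Then $\mathrm{div}_\Omega V\,\mathrm{Ric}_\Omega^n$ equals $\bar\partial$ of an $(n,n-1)$-form, which coincides with $d$ of that form since $\partial$ of an $(n,n-1)$-form vanishes. By Stokes' theorem on the compact manifold $M$, $\mathrm{F}_M(V)=0$.

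The main obstacle I expect is Step 1. One must get the identity $\iota_V\mathrm{Ric}_\Omega=c\,\bar\partial\,\mathrm{div}_\Omega V$ globally and with the correct constant, and this requires the local holomorphic term $\sum_l\partial_lV_l$ to glue correctly. I would handle this by writing $\mathrm{div}_\Omega V=\mathcal L_V\Omega/\Omega$ invariantly. Then $\bar\partial$ commutes with $\mathcal L_V$ for holomorphic $V$, and Cartan's formula applied to $\log$ of the density gives the identity without coordinates.
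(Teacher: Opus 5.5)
\textbf{Verdict.} Your argument is correct in substance, up to two slips listed below. It differs from the paper in that it actually proves the statement, whereas the paper gives no proof. The paper deduces the statement from the Futaki--Morita localization (Bott-type residue) formula. It relies on the remark, following Futaki--Hattori--Ornea, that $\mathrm{F}_M$ is a Futaki--Morita integral invariant, so that localization holds without any K\"ahler assumption; when $V$ has no zeros the residue sum is empty.

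\textbf{Comparison of routes.} You prove the zero-free case directly by the $\bar\partial-\iota_V$ transgression, which is the mechanism behind that localization formula. The cited route gives more: explicit residue contributions at the zeros of $V$, which are useful for computing nonzero values. Your route needs only the pointwise identity relating $\iota_V\mathrm{Ric}_\Omega$ and $\bar\partial\,\mathrm{div}_\Omega V$. That identity holds for any volume form on any compact complex manifold. So your argument is self-contained and shows transparently why no K\"ahler hypothesis enters. Your worry about gluing is moot: both sides of that identity are globally defined, so the local computation suffices.

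\textbf{Slip 1: the sign in Step 1.} For holomorphic $V$,
\[
\iota_V\partial\bar\partial f=\mathcal L_V\bar\partial f=\bar\partial(Vf),
\]
so
\[
\iota_V\mathrm{Ric}_\Omega=-i\,\bar\partial\,\mathrm{div}_\Omega V,
\]
not $+i$. With $d_V=\bar\partial-\iota_V$, the equivariantly closed extension is therefore $\tilde\rho=\mathrm{Ric}_\Omega-i\,\mathrm{div}_\Omega V$. With your sign one gets $d_V\tilde\rho=2i\,\bar\partial\,\mathrm{div}_\Omega V$, which is nonzero in general. Equivalently, you can keep your $\tilde\rho$ and use $d_V=\bar\partial+\iota_V$.

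\textbf{Slip 2: the bidegree of $\eta$.} $\eta$ must be a $(1,0)$-form, since $\iota_V$ annihilates $(0,1)$-forms. Your formula $h(\cdot,\bar V)/|V|_h^2$, with $h$ extended complex-bilinearly, is in fact of type $(1,0)$. This is also the bidegree your Step 2 uses: $\beta$ has components of type $(k+1,k)$. The $\iota_V$-term cannot contribute to the $(n,n)$-part because it would have to come from an $(n+1,n)$-form, and such forms vanish. Hence $\alpha_{(n,n)}=\bar\partial\bigl((\beta\alpha)_{(n,n-1)}\bigr)=d\bigl((\beta\alpha)_{(n,n-1)}\bigr)$, as you claim.
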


\section{Vanishing of the Futaki invariant on lcK with potential}\label{main section}
This section proves our main result, Theorem~\ref{main}. We first derive Proposition~\ref{vanishing if existence} from Theorem~\ref{vanishing on nowhere zero hvf}. We then prove that, in complex dimension greater than two, every lcK manifold with potential admits a nowhere-vanishing holomorphic vector field, which yields the desired vanishing result in higher dimensions. We treat the surface case separately, using the classification of compact complex surfaces together with the vanishing of the Futaki invariant on Hopf surfaces (Lemma~\ref{Hopf surfaces vanishing}).

\begin{proof}[Proof of Proposition \ref{vanishing if existence}]
Let $W\in H^0\big(M,T^{1,0}M\big)$ be a nowhere-vanishing holomorphic
vector field, and let $V\in H^0\big(M,T^{1,0}M\big)$ be an arbitrary
nonzero holomorphic vector field.

Fix a Hermitian metric $h$ 
on $M$. Since $M$ is compact, $W$ is nowhere-vanishing, and $V$ is 
nonzero, the numbers 
$$ 
a:=\min_{x\in M}|W(x)|_h $$
and 
$$
b:=\max_{x\in M}|V(x)|_h 
$$ 
are well-defined positive real numbers. Choose $t\in\mathbb{C}^*$ such 
that 
$$ 
0<|t|<\frac{a}{b}. 
$$ 
Then $W+tV$ is a nowhere-vanishing holomorphic vector field on $M$. 
Indeed, for every $x\in M$, we have 
$$ 
\bigl|(W+tV)(x)\bigr|_h 
\geq |W(x)|_h-|t|\,|V(x)|_h 
\geq a-|t|b>0. 
$$ 
Finally, using the fact that the Futaki invariant vanishes on 
nowhere-vanishing holomorphic vector fields (Theorem~\ref{vanishing on nowhere zero hvf}) and is 
$\mathbb{C}$-linear, we obtain 
$$ 
0=\mathrm{F}_M(W+tV) 
 =\mathrm{F}_M(W)+t\mathrm{F}_M(V) 
 =t\mathrm{F}_M(V). 
$$ 
Since $t\neq0$, we conclude that $\mathrm{F}_M(V)=0$. As the choice of $V$ was arbitrary, we obtain $\mathrm{F}_M\equiv0$.
\end{proof}

\begin{rem}
As a consequence of the Poincar\'e--Hopf theorem \cite{Hop27}, the existence of a
nowhere-vanishing (holomorphic) vector field on a compact complex manifold
implies that its Euler characteristic $\chi$ is zero. However, there exist compact non-K\"ahler complex manifolds with vanishing Futaki invariant but admitting no such vector field. For example, consider the product
$M=\mathbb{CP}^n\times S$, where $n\geq 1$ and $S$ is an Inoue
surface of type $S_M$. This is a compact non-K\"ahler manifold
on which the Futaki invariant vanishes. Indeed, $S$ admits no
non-trivial holomorphic vector fields \cite[Proposition 2]{Ino74}, so every holomorphic
vector field on $M$ is induced by one on $\mathbb{CP}^n$.
By taking a product volume form, with the volume form on $\mathbb{CP}^n$ induced by the Fubini--Study K\"ahler--Einstein metric, we obtain the vanishing of the Futaki invariant on $M$ directly from \eqref{secondform}.
On the other hand, since
$\chi(\mathbb{CP}^n)=n+1\neq 0$, every holomorphic vector field
on $\mathbb{CP}^n$ has a zero. Consequently, $M$ admits no
nowhere-vanishing holomorphic vector field.
\end{rem}

We now establish the existence of a nowhere-vanishing holomorphic vector
field on compact lcK manifolds with potential in complex dimension at
least three.

\begin{prop}\label{holomorphic vf on lcK with pot}
Let $(M,J,\omega,\theta)$ be a compact lcK manifold with potential of
complex dimension $n\ge 3$. Then $M$ admits a nowhere-vanishing holomorphic
vector field.
\end{prop}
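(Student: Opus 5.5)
The plan is to realize $M$ inside a linear Hopf manifold and to produce the vector field from the Euler field on the cover. I will instead use the intrinsic structure of lcK manifolds with potential, following Ornea--Verbitsky. By Remark~\ref{proper potential}, we may assume $\omega$ has a proper automorphic potential $\varphi$ on a Kähler covering $\tilde M$. For $n\ge 3$, the Kähler $\mathbb{Z}$-covering $\tilde M$ with $\Gamma\cong\mathbb{Z}$ (after passing to the minimal such cover) is then known to be the complement $\tilde M_c\setminus\{0\}$, where $\tilde M_c$ is a Stein variety with one isolated singular point $0$ (Rossi/Andreotti--Siu filling, which needs $\dim_{\mathbb C}\ge 3$). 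The generator $\gamma$ of $\Gamma$ extends to a holomorphic contraction of $\tilde M_c$ centered at $0$. By \cite{OV10}, $\tilde M_c$ admits a $\gamma$-equivariant closed embedding into $\mathbb{C}^N$ with $\gamma$ linear. I would take these facts as the input from the embedding theorem quoted above.

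Next I construct the $\mathbb{C}^*$-action. Write the linear contraction as $\gamma=\gamma_s\gamma_u$ (Jordan decomposition, commuting semisimple and unipotent parts). Take the Zariski closure $G$ of $\langle\gamma\rangle$ in $GL_N(\mathbb{C})$. This is a commutative algebraic group preserving the subvariety $\tilde M_c$, since $\tilde M_c$ is algebraic, being the affine cone obtained as a $\gamma$-invariant closed subvariety of a contraction. Because all eigenvalues of $\gamma$ have modulus $<1$, the identity component $G^0$ contains a one-parameter subgroup $\lambda:\mathbb{C}^*\to G$ all of whose weights are positive integers. To build it, take $\log|\cdot|$ of the eigenvalues of $\gamma_s$, approximate rationally within the torus $\overline{\langle\gamma_s\rangle}^0$, and rescale. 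Each $\lambda(t)$ commutes with $\gamma$ and preserves $\tilde M_c$ and $0$. Hence $\lambda$ acts holomorphically on $\tilde M=\tilde M_c\setminus\{0\}$, commuting with the deck group.

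I then show the generator is nowhere-vanishing. Let $\tilde W$ be the holomorphic vector field generating $\lambda$. In diagonal coordinates it is $\sum_j k_j z_j\partial_{z_j}$ with all $k_j>0$. Its only zero in $\mathbb{C}^N$ is $0$, so $\tilde W$ has no zeros on $\tilde M$. Equivalently, the action is locally free there, since positive weights force $\lambda(t)x\to0$ as $t\to0$ for $x\neq0$. Because $\tilde W$ commutes with $\gamma$, it is $\Gamma$-invariant and descends to a holomorphic vector field $W$ on $M$, which is nowhere-vanishing.

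I expect the main obstacle to be constructing a one-parameter subgroup with strictly positive weights inside the group generated by $\gamma$. The unipotent part and non-rational relations among eigenvalue moduli complicate the construction. My first attempt would be to replace $\gamma$ by the semisimple contraction $\gamma_s$, which still acts on $\tilde M_c$ because it lies in the Zariski closure. Then I would use that the closure of $\{\gamma_s^k\}$ is a compact-times-torus group containing the real flow $\exp(s\log|\gamma_s|)$. Any holomorphic one-parameter subgroup with weights close to $-\log|\alpha_j|$ then works. If this fails, I would fall back on the Ornea--Verbitsky statement that $\tilde M_c$ carries a holomorphic $\mathbb{C}^*$-action contracting to $0$.
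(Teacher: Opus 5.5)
Your argument is correct, but it obtains the key input differently from the paper. The paper quotes Ornea--Verbitsky for a locally free holomorphic $\mathbb{C}^*$-action on the $\mathbb{Z}$-cover commuting with $\gamma$. It then argues abstractly that the generator has no zeros: a zero would be fixed by the whole flow $\rho(e^t,\cdot)$, hence by all of $\mathbb{C}^*$ since $\exp$ is surjective. It also shows the generator is $\gamma$-invariant.

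You instead build the action from the linear Hopf embedding. In an eigenbasis of $\gamma_s=\mathrm{diag}(\alpha_1,\dots,\alpha_N)$, the Zariski closure $G$ of $\langle\gamma\rangle$ contains $\gamma_s$. Hence it contains the diagonalizable group $\{\mathrm{diag}(t_1,\dots,t_N):\prod_j t_j^{m_j}=1 \text{ for all } m\in L\}$, where $L\subset\mathbb{Z}^N$ is the lattice of multiplicative relations among the $\alpha_j$. Its cocharacters are the integral vectors of $L^\perp$. Since $(-\log|\alpha_j|)_j$ lies both in the rational subspace $L^\perp\otimes\mathbb{R}$ and in the open positive orthant, a cocharacter with all $k_j>0$ exists. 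The rational approximation must be taken inside $L^\perp\otimes\mathbb{R}$, not just near $(-\log|\alpha_j|)_j$ in $\mathbb{R}^N$, or the subgroup need not lie in $G$. Then $\sum_j k_jz_j\partial_{z_j}$ visibly vanishes only at $0$ and commutes with $\gamma$ because $G$ is abelian. So your non-vanishing and descent steps are more direct than the paper's.

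The price is that you must supply what the paper's citation hides: the $\gamma$-equivariant linear embedding of $\tilde M_c$, and, crucially, algebraicity of $\tilde M_c$. Your stated reason (``being the affine cone'') is not a justification, since $\tilde M_c$ is not a priori invariant under any scaling. What you need is Ornea--Verbitsky's theorem that a closed analytic subvariety of $\mathbb{C}^N$ invariant under a linear contraction is algebraic. This makes $\{g\in\mathrm{GL}_N(\mathbb{C}): g\tilde M_c\subseteq\tilde M_c\}$ Zariski closed, which forces $G$ to preserve $\tilde M_c$. With that reference in place, your proof essentially unpacks the argument behind the theorem the paper quotes, and it has the bonus of an explicit positive-weight generator.
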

\begin{proof}
Remark~\ref{proper potential} ensures that $M$ carries an lcK metric
with proper potential. Fix such a metric, and let $\tilde{M}$ be
the corresponding K\"ahler $\mathbb{Z}$-cover of $M$. Let $\gamma$
be a generator of its deck transformation group, so that
$$
M\simeq\tilde{M}/\langle\gamma\rangle.
$$
By \cite[Theorem~7.9]{OV24a} and \cite[Theorem 7.10]{OV26}, $\tilde{M}$ carries a
locally free holomorphic $\mathbb{C}^*$-action
$$
\rho\colon\mathbb{C}^*\times\tilde{M}\longrightarrow\tilde{M}
$$
which commutes with $\gamma$, in the sense that
$$
\rho(\lambda,\gamma x)=\gamma\bigl(\rho(\lambda,x)\bigr)
$$
for every $(\lambda,x)\in\mathbb{C}^*\times\tilde{M}$. For
$x\in\tilde{M}$, let 
$$
(\mathbb{C}^*)_x
:=
\bigl\{\lambda\in\mathbb{C}^*
\mid \rho(\lambda,x)=x\bigr\}
$$
denote its stabilizer.
Since the action is locally free, $(\mathbb{C}^*)_x$ is discrete.

Let
$\tilde{W}\in H^0\bigl(\tilde{M},T^{1,0}\tilde{M}\bigr)$
be the holomorphic vector field generated by this action, defined by
$$
\tilde{W}_x
:=
\left.\frac{d}{dt}\right|_{t=0}\rho(e^t,x)
$$
for every $x\in \tilde{M}$. 

We claim that $\tilde{W}$ is nowhere-vanishing. Suppose, by
contradiction, that $\tilde{W}_x=0$ for some $x\in\tilde{M}$.
The flow of $\tilde{W}$ is given by
$$
\Phi_t(x)=\rho(e^t,x).
$$
Since $\tilde{W}_x=0$, the constant curve $c(t)=x$ is an
integral curve with $c(0)=x$. Therefore, by uniqueness of integral curves, we obtain
$$
\rho(e^t,x)=x
$$
for every $t\in\mathbb{C}$. Since the exponential map
$\exp\colon\mathbb{C}\to\mathbb{C}^*$ is surjective, this implies
$(\mathbb{C}^*)_x=\mathbb{C}^*$, which contradicts the local freeness of
the action.

It remains to show that $\tilde{W}$ descends to $M$. Using
$\rho(1,x)=x$ and differentiating the identity
$$\rho(e^t,\gamma x)=\gamma\big(\rho(e^t,x)\big)$$
at $t=0$,
we obtain
\begin{align*}
    \tilde{W}_{\gamma x}&=\left.\frac{d}{dt}\right|_{t=0}
  \rho(e^t,\gamma x)\\
  &=\left.\frac{d}{dt}\right|_{t=0}
  \gamma\bigl(\rho(e^t,x)\bigr)\\
  &=(d\gamma)_x\bigl(\tilde{W}_x\bigr).
\end{align*}
Thus, $\tilde{W}$ is $\gamma$-invariant and hence it descends to a holomorphic vector
field
$$
W\in H^0\bigl(M,T^{1,0}M\bigr).
$$
Since $\tilde{W}$ is nowhere-vanishing, $W$ is nowhere-vanishing as well.
\end{proof}

\begin{rem}\label{vanishing on Vaisman}
The vanishing of the Futaki invariant on compact Vaisman manifolds was
proved in \cite[Theorem 1.2]{FHO13}. It also follows immediately from
Proposition~\ref{vanishing if existence}. Indeed, the Lee vector field is a nowhere-vanishing holomorphic vector field on Vaisman manifolds \cite[Proposition 7.24]{OV24b}.
\end{rem}

The vanishing of the Futaki invariant on primary Hopf surfaces was already
established in \cite{Per26}. We give here a different proof based on
Proposition~\ref{vanishing if existence}, and show that the vanishing extends naturally to secondary Hopf surfaces.

\begin{lem}\label{Hopf surfaces vanishing}
The Futaki invariant vanishes on every Hopf surface, both primary and
secondary.
\end{lem}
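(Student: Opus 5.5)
By Proposition~\ref{vanishing if existence}, it suffices to exhibit a nowhere-vanishing holomorphic vector field on every Hopf surface, primary or secondary.

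For a primary Hopf surface $H=(\mathbb{C}^2\setminus\{0\})/\langle\gamma\rangle$, I use the Kodaira normal form of the contraction. After a holomorphic change of coordinates, $\gamma$ is either
\[
\gamma(z,w)=(\alpha z,\beta w),\qquad 0<|\alpha|\le|\beta|<1,
\]
or
\[
\gamma(z,w)=(\beta^m z+\lambda w^m,\beta w),\qquad \lambda\neq0,\ m\ge1.
\]
In the diagonal case, the Euler field $\tilde W=z\partial_z+w\partial_w$ commutes with $\gamma$ and vanishes only at the origin. In the non-diagonal case, the field
\[
\tilde W=mz\partial_z+w\partial_w
\]
generates the flow $(z,w)\mapsto(e^{mt}z,e^{t}w)$. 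This flow commutes with $\gamma$: both composites send $(z,w)$ to $(e^{mt}(\beta^m z+\lambda w^m),\,e^t\beta w)$. Again $\tilde W$ vanishes only at $0$. In either case $\tilde W$ is $\gamma$-invariant, so it descends to a nowhere-vanishing holomorphic vector field on $H$. Checking the commutation in the resonant case is a one-line computation.

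For a secondary Hopf surface $S=H/G$, with $G$ finite and acting freely, I need a field on $H$ that is also $G$-invariant. Lifting to $\mathbb{C}^2\setminus\{0\}$, the group $\tilde G$ generated by $\gamma$ and lifts of $G$ acts there. Each element extends holomorphically over $0$ by Hartogs and fixes the origin. By the known structure of secondary Hopf surfaces (Kato), one can choose coordinates in which $\tilde G$ acts compatibly with the normal form of $\gamma$. In the diagonal case, the elements of $\tilde G$ are then linear, and since every linear map commutes with the Euler field, $\tilde W$ is $\tilde G$-invariant. In the resonant case, the elements of $\tilde G$ have the form $(z,w)\mapsto(a z+c w^m,\, b w)$. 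These commute with $mz\partial_z+w\partial_w$ only when $a=b^m$, and I would extract this condition from the requirement that they commute with $\gamma$.

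The main obstacle is this step: justifying simultaneous normalization of $\tilde G$ and verifying weighted-homogeneity in the resonant case. If that route stalls, I would fall back on averaging. One could average $\tilde W$ over $G$ acting on $H$; since $G$ commutes with the $\mathbb{C}^*$-flow up to automorphisms of the same weighted type, each $G$-translate of $\tilde W$ should be a positive multiple of $\tilde W$. The average is then still nowhere vanishing, so it descends to $S$. Proposition~\ref{vanishing if existence} then gives $\mathrm{F}_S\equiv0$.
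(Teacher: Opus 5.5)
Your primary case is the paper's argument: the same normal form, the Euler field when $\gamma$ is diagonal, $mz\partial_z+w\partial_w$ in the resonant case, and then Proposition~\ref{vanishing if existence}.

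\textbf{The gap is the secondary case $S=H/G$.} Your strategy needs a nowhere-vanishing field on $H$ that is also $G$-invariant, and you never establish one. You attribute the simultaneous normalization of $\tilde G$ to Kato without a precise statement, and it is not automatic. For instance, if $\gamma(z,w)=(\beta^k z,\beta w)$ with $k\ge 2$, then every map $(z,w)\mapsto(az+cw^k,bw)$ commutes with $\gamma$. For $c\neq0$ such a map does not preserve the Euler field. So commuting with a diagonal $\gamma$ does not make the rest of $\tilde G$ linear; a further conjugation is needed, which you do not carry out.

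The averaging fallback does not fix this. Let $W$ be the field induced on $H$. If $g_*W=c_gW$ with constants $c_g$, then $g\mapsto c_g$ is a homomorphism from the finite group $G$ to $\mathbb{C}^*$. Each $c_g$ is therefore a root of unity, and positivity forces $c_g=1$. That is exactly the invariance you were trying to prove, so averaging adds nothing. Without that claim, the average $\frac{1}{|G|}\sum_{g\in G}g_*W$ is $G$-invariant but may have zeros, or even vanish identically, and then Proposition~\ref{vanishing if existence} does not apply. Your route might be completable by quoting Kato's normal form for fundamental groups of Hopf surfaces precisely, but as written the secondary case is not proved.

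\textbf{The missing idea} is that you need no nowhere-vanishing field on $S$ at all, because the Futaki invariant behaves well under finite unramified coverings. Let $\pi\colon H\to S$ be the covering, $X$ any holomorphic vector field on $S$, $\hat X$ its unique holomorphic lift, and $\Omega$ a volume form on $S$. Since $\pi$ is a local biholomorphism, the local formulas give $\pi^*\mathrm{div}_\Omega X=\mathrm{div}_{\pi^*\Omega}\hat X$ and $\pi^*\Ric_\Omega=\Ric_{\pi^*\Omega}$. Computing $\mathrm{F}_H$ with the volume form $\pi^*\Omega$, which is allowed by independence of the volume form, yields $\mathrm{F}_H(\hat X)=\deg(\pi)\,\mathrm{F}_S(X)$. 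Since $\mathrm{F}_H\equiv0$ by the primary case, $\mathrm{F}_S(X)=0$ for every $X$. This is how the paper concludes: it reduces the secondary case to the primary one with no classification input.
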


\begin{proof}
Let $H_\gamma$ be a primary Hopf surface. Following \cite[p. 181]{Arn88} and
\cite[Appendix~A]{Hae85}, after a
biholomorphic change of coordinates, we may assume that $\gamma$ is in
the normal form
$$\gamma(z,w)=(\lambda z+a w^m,\mu w),$$
where
$$a,\lambda,\mu\in\mathbb{C}, \qquad 0<|\lambda|\leq |\mu|<1,\qquad
a(\lambda-\mu^m)=0,$$
and $m\in\mathbb N_{>0}$.

If $a=0$, consider the holomorphic vector field
$$V=z\frac{\partial}{\partial z}
+w\frac{\partial}{\partial w}$$
on $\mathbb C^2\setminus\{0\}$. It is nowhere-vanishing and satisfies
$$d\gamma(V)=V\circ\gamma,$$
hence, it descends to a nowhere-vanishing holomorphic vector field on
$H_\gamma$.

If $a\neq0$, the same argument applies to
$$V=mz\frac{\partial}{\partial z}
+w\frac{\partial}{\partial w},$$
which is again nowhere-vanishing and $\gamma$-invariant. Therefore,
in both cases, Proposition~\ref{vanishing if existence} yields $\mathrm{F}_{H_\gamma}\equiv0$.

Now let $H$ be a secondary Hopf surface. Then there exists a finite covering
$$
\pi\colon\hat H\longrightarrow H
$$
such that $\hat{H}$ is a primary Hopf surface. Let $X$ be a holomorphic vector field on $H$, and let $\Omega$ be a volume form on $H$. Since $\pi$ is a local biholomorphism, $X$ admits a unique holomorphic lift $\hat X$ to $\hat H$ such that
$d\pi(\hat X)=X\circ\pi$. Moreover, $\hat\Omega:=\pi^*\Omega$ is a volume form on $\hat H$.
We also note that the divergence and the Ricci form are compatible with the pullback $\pi^*$, namely,
$$\pi^* \text{div}_\Omega X=\text{div}_{\hat\Omega}\hat X, \qquad \pi^*\text{Ric}_\Omega=\text{Ric}_{\hat\Omega}. $$
Therefore, using the vanishing already proved for primary Hopf surfaces, we obtain
\begin{align*}
0=\mathrm{F}_{\hat{H}}(\hat{X})
&=
\int_{\hat{H}}
\text{div}_{\hat{\Omega}}\hat{X}\,
\text{Ric}_{\hat{\Omega}}^2 \\
&=
\int_{\hat{H}}
\pi^*\left(
\text{div}_{\Omega}X\,
\text{Ric}_{\Omega}^2
\right) \\
&=
\deg(\pi)
\int_H
\text{div}_{\Omega}X\,
\text{Ric}_{\Omega}^2 \\
&=
\deg(\pi)\,\mathrm{F}_H(X).
\end{align*}
Therefore, $\mathrm{F}_H(X)=0$ for every holomorphic vector field $X$ on $H$,
and hence $\mathrm{F}_H\equiv 0$.
\end{proof}

We now prove our main result.
\begin{proof}[Proof of Theorem \ref{main}]
Combining Proposition~\ref{vanishing if existence} and
Proposition~\ref{holomorphic vf on lcK with pot}, we obtain the vanishing
of the Futaki invariant on compact lcK manifolds with potential of complex
dimension at least three.

It remains to consider the case in which $M$ is a compact complex
surface. We first observe that $M$ cannot contain a rational curve. Indeed, suppose that there exists a non-constant holomorphic map
$$
f\colon\mathbb{P}^1\longrightarrow M,
$$
and let $\pi\colon(\tilde M,\tilde\omega)\to M$ be a K\"ahler covering associated with the given
lcK structure with potential, so that
$$
\tilde\omega=dd^c\varphi,
$$
for some smooth function $\varphi$.
Since $\mathbb{P}^1$ is simply connected, $f$ admits a lift
$\tilde f\colon\mathbb{P}^1\to\tilde M$, which is non-constant.
Since $\tilde\omega$ is K\"ahler, the form
$\tilde{f}^*\widetilde{\omega}$ is semipositive and not identically
zero. Therefore,
$$0<
\int_{\mathbb{P}^1}\tilde f^{*}\tilde\omega
=
\int_{\mathbb{P}^1}dd^c(\varphi\circ\tilde f)
=0,
$$
where the last equality follows from Stokes' theorem, a contradiction.
Thus, $M$ contains no rational curves and is therefore
minimal.

Assume first that $M$ is not of class $\mathrm{VII}$. Since $M$ is a minimal non-K\"ahler surface, it admits a Vaisman structure
by \cite[Theorem~25.6]{OV24b}. Hence, $\mathrm{F}_M\equiv 0$ by
Remark~\ref{vanishing on Vaisman}.

Suppose next that $M$ is of class $\mathrm{VII}$ with $b_2(M)>0$. By
\cite[Th\'eor\`eme~0.1]{DOT01}, if $M$ admits a nonzero holomorphic vector
field, then $M$ is a Kato surface. Since Kato surfaces do not admit lcK
metrics with potential \cite[Proposition 2.5]{IOP17}, $M$ admits no nonzero holomorphic vector fields. Consequently, $H^0\big(M,T^{1,0}M\big)=\{0\}$, and therefore $\mathrm{F}_M\equiv 0$.

Finally, suppose that $M$ is of class
$\mathrm{VII}$ and $b_2(M)=0$. Since $M$ is minimal, Bogomolov's classification theorem
\cite{Bog76,Tel94} implies that $M$ is either an Inoue surface or a Hopf surface.
The Inoue case is excluded because Inoue surfaces do not admit
lcK metrics with potential \cite[Corollary~4.13]{Oti18}. Therefore,
$M$ is a Hopf surface, and Lemma~\ref{Hopf surfaces vanishing} yields $\mathrm{F}_M\equiv 0$.
\end{proof}

\begin{rem}
We note that, beyond the lcK setting, there are other important classes
of compact non-K\"ahler manifolds admitting a nowhere-vanishing
holomorphic vector field and hence having vanishing Futaki invariant,
such as the Calabi--Eckmann manifolds.
A Calabi--Eckmann manifold $M_{p,q}$ (see, for example,
\cite[Example~2.5]{Ver05}) is constructed as follows. Fix integers
$p,q\geq 1$ and a complex number
$\tau\in\mathbb{C}\setminus\mathbb{R}$. Then $M_{p,q}$ is defined as
the quotient of
$
\big(\mathbb{C}^{p+1}\setminus\{0\}\big)
\times
\big(\mathbb{C}^{q+1}\setminus\{0\}\big)
$
by the $\mathbb{C}$-action
$$
t\cdot(z,w):=(e^t z,e^{\tau t}w).
$$
The nowhere-vanishing holomorphic vector field
$$
Z=\sum_{i=0}^{p}z_i\frac{\partial}{\partial z_i}
$$
is invariant under this action and descends to a nowhere-vanishing holomorphic
vector field on $M_{p,q}$. 
These manifolds do not admit lcK metrics
\cite[Remark~37.32]{OV24b}. They are also non-K\"ahler, since their
second Betti number vanishes.
\end{rem}

\bibliographystyle{alpha}
%\bibliography{Bibliography}

\end{document}